\documentclass[11pt, oneside]{amsart}   	
\usepackage{amsmath,amsthm,amscd,amsfonts, amssymb, mathrsfs}
\textwidth 150mm
\oddsidemargin 5mm
\textheight 230mm
\topmargin -10mm 

\usepackage{geometry}                		
\geometry{letterpaper}                   		
\usepackage{graphicx}				
\usepackage{amssymb}
\newcommand{\sect}[1]{\section{#1}\setcounter{equation}{0}}
\newcommand{\subsect}[1]{\subsection{#1}}

\font\mbn=msbm10 scaled \magstep1
\font\mbs=msbm7 scaled \magstep1
\font\mbss=msbm5 scaled \magstep1
\newfam\mbff
\textfont\mbff=\mbn
\scriptfont\mbff=\mbs
\scriptscriptfont\mbff=\mbss   

\newcommand{\Di}      {\mathbb{D}}
\newcommand{\RR}       { \mathbb{R}}

\newcommand{\N}       { \mathbb{N}}
  
\newcommand\Co           {{\mathbb C}}

\newtheorem{Th}{Theorem}[section]
\newtheorem{Lm}[Th]{Lemma}
\newtheorem{C}[Th]{Corollary}

\newtheorem{Prop}[Th]{Proposition}
\newtheorem{R}[Th]{Remark}
\newtheorem{Problem}[Th]{Problem}

\newtheorem{E}[Th]{Example}

\newtheorem*{Th A}{Theorem A}
\newtheorem*{Th B}{Theorem B}
\begin{document}

\title[On Nonlinear Rudin-Carleson Type Theorems]{On Nonlinear Rudin-Carleson Type Theorems}
\author{Alexander Brudnyi}
\address{Department of Mathematics and Statistics\newline
\hspace*{1em} University of Calgary\newline
\hspace*{1em} Calgary, Alberta, Canada\newline
\hspace*{1em} T2N 1N4}
\email{abrudnyi@ucalgary.ca}

\keywords{Rudin-Carleson theorem, uniform algebra, peak-interpolation set, maximal ideal space, totally disconnected set, Minkowski functional}
\subjclass[2010]{Primary 46J10. Secondary 32A38.}

\thanks{Research is supported in part by NSERC}

\begin{abstract} 
In this paper we study nonlinear interpolation problems for interpolation and peak-interpolation sets of function algebras. The subject goes back to the classical Rudin-Carleson interpolation theorem. In particular, we prove the following nonlinear version of this theorem:
 Let $\bar{\mathbb D}\subset \mathbb C$ be the closed unit disk, $\mathbb T\subset\bar{\mathbb D}$ the unit circle, $S\subset\mathbb T$ a closed subset of Lebesgue measure zero and $M$ a connected complex manifold.
 Then for every  continuous $M$-valued  map $f$ on $S$ there exists a continuous $M$-valued map $g$ on $\bar{\mathbb D}$ holomorphic on its interior  such that $g|_S=f$. We also consider similar interpolation problems for continuous maps
 $f: S\rightarrow\bar M$, where $\bar M$ is a complex manifold with boundary $\partial M$ and interior $M$. Assuming that $f(S)\cap\partial M\ne\emptyset$ we are looking for holomorphic extensions $g$  of $f$ such that $g(\bar{\mathbb D}\setminus S)\subset M$. 
  \end{abstract}

\date{}

\maketitle

\sect{Formulation of Main Results}
\noindent {\bf 1.1.} 
Let $A$ be a {\em uniform} algebra on a compact Hausdorff space $X$, i.e.,
a closed unital subalgebra of the Banach algebra $C(X)$ of complex continuous functions on $X$ equipped with the norm $\|f\|_{C(X)}:=\max_X |f|$ separating points of $X$. (For the theory of uniform algebras see, e.g., the book \cite{G}.) \smallskip

 A compact subset $S\subset X$ is said to be {\em interpolation set} for $A$ if the restriction  to $S$ maps $A$  onto $C(S)$. The number
\begin{equation}\label{e1}
c_A(S):=\sup_{f\in C(S),\, \|f\|_{C(S)}=1} \inf\{\|F\|_{C(X)}\, :\, F\in A,\ F|_S=f\}
\end{equation}
(finite by the Banach open mapping theorem) is called the {\em interpolation constant } for $S$. 

In this paper we consider interpolation problems for continuous maps of $S$ in complex manifolds.
To formulate our results we require several definitions.

\smallskip

The {\em maximal ideal space} $\mathfrak M(A)$ is the set of all nontrivial complex homomorphisms of $A$.
It is a compact subset of the closed unit ball of the dual space $A^*$ equipped with the weak$^*$ topology. The {\em Gelfand transform} $\hat{\,}: A\rightarrow C(\mathfrak M(A))$, $\hat{a}(\varphi):=\varphi(a)$, maps $A$ isometrically onto a uniform subalgebra on  $\mathfrak M(A)$ and its transpose embeds $X$ into $\mathfrak M(A)$. Without loss of generality we will identify $A$ with its image under $\hat{\,}$
and $X$ with its image under the embedding. \smallskip

A complex manifold $ M$ is said to be {\em Oka} if every holomorphic map $f: K\rightarrow  M$ from a neighbourhood of  a compact convex set $K \subset\Co^n$, $n\in\N$, can be approximated uniformly on $K$ by entire maps $\mathbb C^n\rightarrow M$.

The class of Oka manifolds includes, in particular, complex homogeneous manifolds, complements in $\mathbb C^n$, $n>1$, of complex algebraic subvarieties of codimension $\ge 2$ and of compact polynomially convex sets, Hopf manifolds (i.e., nonramified holomorphic quotients of $\mathbb C^n\setminus\{0\}$). Also,  holomorphic fibre bundles whose bases and fibres are Oka manifolds are Oka manifolds as well. (We refer to the book \cite{F1} and the paper \cite{K} for other examples and basic results of the theory of Oka manifolds.)\smallskip

In what follows, $C(X,Y)$ stands for the set of continuous maps  between topological spaces $X$ and $Y$.  For a uniform algebra $A$ on $X$ and a subspace $M\subset\mathbb C^n$, we denote by $A(X,M)\subset C(X,M)$ the set of maps with coordinates in $A$. For $V\subset C(X,Y)$ and $S\subset X$ the {\em trace space} $V|_S\, (\subset C(S,Y))$ consists of restrictions of maps in $V$ to $S$.\smallskip

The following result is a particular case of \cite[Thm.\,1.4]{Br}.
\begin{Th}\label{te0}
Let $M\subset\mathbb C^n$ be a complex regular submanifold\,\footnote{ I.e., $M$ is equipped with the induced topology.} and an Oka manifold and $S\subset \mathfrak M(A)$ be an interpolation set for the uniform algebra $A$. Then
\[
A(\mathfrak M(A),M)|_S=C(\mathfrak M(A),M)|_S.
\]
\end{Th}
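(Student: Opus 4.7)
Set $X:=\mathfrak M(A)$ and, using the hypothesis $f\in C(X,M)|_S$, fix a continuous extension $\tilde F\in C(X,M)$ with $\tilde F|_S=f$. Since $M\subset\mathbb C^n$ is a regular complex submanifold, a standard tubular-neighborhood construction (Docquier--Grauert, applied to a Stein piece of $M$ containing the compact set $\tilde F(X)$) provides an open neighborhood $U\subset\mathbb C^n$ of $\tilde F(X)$ together with a holomorphic retraction $\rho\colon U\to M$. The plan is to produce a map $h=(h_1,\dots,h_n)\in A^n$ with $h|_S=f$ and $h(X)\subset U$. Given such $h$, the Arens--Calder\'on--Shilov holomorphic functional calculus---applied to $\rho$ on the open neighborhood $U$ of the joint spectrum $h(X)$---yields $g:=\rho\circ h\in A^n$; since $\rho|_M=\mathrm{id}_M$ and $f(S)\subset M$, one gets $g|_S=f$ and $g(X)\subset M$, so $g\in A(X,M)$ is the required extension.

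The construction of $h$ is the heart of the argument. Begin with any $h_0\in A^n$ satisfying $h_0|_S=f$; such $h_0$ exists componentwise by $A|_S=C(S)$. Its image may stray outside $U$, so correct it iteratively. At step $k$ invoke the Oka property of $M$---for instance via a dominating holomorphic spray $s\colon E\to M$ furnished by Gromov's theorem, or directly via the convex approximation property---to build a small continuous deformation of $h_k$ that brings $h_k(X)$ closer to $M$ while preserving the values on $S$. Then realize this deformation as an $A$-valued correction vanishing on $S$, which is possible by the interpolation property together with the norm estimate $c_A(S)$ coming from the open mapping theorem applied to the surjection $A\to C(S)$. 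Passing to the uniform limit produces the required $h$.

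The main obstacle is synchronizing the two quantitative inputs so that the iteration converges inside $U$: the amplitude of the Oka-side deformations on $M$ must be compatible with the amplification factor $c_A(S)$ incurred when lifting them to $A$-valued corrections trivial on $S$. Each correction has to be small enough in $C(X)$-norm to keep $h_k(X)\subset U$ throughout, yet sizeable enough in the spray direction to genuinely decrease the distance from $h_k(X)$ to $M$. Arranging this delicate balance---essentially a uniform-algebra incarnation of the Oka principle---is where the Oka and regular-submanifold hypotheses on $M$ enter jointly: the tubular neighborhood permits transfer of target-side holomorphic deformations into source-side $A$-valued corrections via holomorphic functional calculus, while the Oka property guarantees that the deformations exist with the required flexibility. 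Once $h$ is secured, the final step $g=\rho\circ h$ is automatic.
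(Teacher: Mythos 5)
The paper does not actually prove Theorem~\ref{te0}; it cites it as a special case of \cite[Thm.\,1.4]{Br}, so there is no in-paper argument for direct comparison. (The proofs the paper does give, for Theorem~\ref{te1}, use a quite different device: a locally biholomorphic surjection $\mathbb D^n\to M$ from \cite{FS} or a surjective map $\Co^n\to M$ from \cite{F2}, lifted through $f$ by exploiting total disconnectedness of $S$ --- a hypothesis absent in Theorem~\ref{te0}.) Judged on its own, your sketch has a sound outer shell but a genuine gap at its core.

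The outer shell is fine: if one can produce $h\in A^n$ with $h|_S=f$ and $h(\mathfrak M(A))$ contained in an open set $U\subset\Co^n$ carrying a holomorphic retraction $\rho\colon U\to M$, then, since $h(\mathfrak M(A))$ equals the joint spectrum of $h_1,\dots,h_n$ in $A$, the Arens--Calder\'on functional calculus yields $g:=\rho\circ h\in A(\mathfrak M(A),M)$, and $\rho|_M=\mathrm{id}$ gives $g|_S=f$. The gap is that you never actually construct $h$. The ``iterative correction'' paragraph is not an argument: (i) the initial $h_0(\mathfrak M(A))$ may lie nowhere near $U$, so this is not a perturbation problem and there is no reason the corrections can be taken small; (ii) each correction must vanish on $S$ and is therefore amplified by the factor $c_A(S)$, and you give no mechanism to keep the resulting sum inside $U$; (iii) the phrase ``brings $h_k(X)$ closer to $M$'' does not match the actual requirement, which is containment in a possibly thin tubular neighborhood of the specific compact $\tilde F(X)$, not proximity to $M$ as a whole; and (iv) it is never explained how the Oka property of $M$ --- a statement about approximating holomorphic maps from compact convex sets in $\Co^N$ into $M$ by entire maps --- produces $A$-valued corrections to a $\Co^n$-valued map $h_k$ that does not take values in $M$. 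Indeed, if the argument worked as stated it would make no essential use of the Oka hypothesis, which should be a warning sign. As written, the ``heart of the argument'' is exactly what is missing, and the appeal to ``a uniform-algebra incarnation of the Oka principle'' is a restatement of the theorem rather than a proof of it.
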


In other words, under the above conditions a map $f\in C(S,M)$ extends to a $g\in A(\mathfrak M(A),M)$ if and only if it extends to a map from $C(\mathfrak M(A), M)$.
In the next result, we  show that for totally disconnected interpolation sets (such as in the Rudin-Carleson theorem)  similar interpolation problems are always solvable in a more general setting.\smallskip

Let  $A$ be a uniform algebra on $X$. For a family $\mathcal F=\{f_1,\dots, f_n\}\subset A$ we denote by $A_{\mathcal F}\subset A$ the closed unital subalgebra generated by $f_1,\dots, f_n$. The maximal ideal space $\mathfrak M(A_\mathcal F)$ can be naturally identified with the polynomially convex hull of the compact set
$\{F(x):=(f_1(x),\dots, f_n(x))\in \mathbb C^n\, :\, x\in X\}\subset\mathbb C^n$,  the {\em joint spectrum} of $\mathcal F$. 

Let $M$ be a complex manifold and $g$ be a holomorphic map into $M$ defined on a neighbourhood of $\mathfrak M(A_\mathcal F)$. The continuous map $F^*g:=g\circ F: \mathfrak M(A)\rightarrow M$ is said to be {\em holomorphic}; the set of such maps is denoted by
$\mathcal O_{\mathcal F}(\mathfrak M(A), M)$. Note that if $M\subset\Co^N$, $N\in\N$, then $\mathcal O_{\mathcal F}(\mathfrak M(A), M)\subset A(\mathfrak M(A), M)$.

\begin{Th}\label{te1}
Let $S\subset X$ be a totally disconnected interpolation set for $A$ and $M$ be a connected complex manifold.
\begin{itemize}
\item[(a)] If $c_A(S)=1$, then for every   $f\in C(S, M)$ there exists a map  $g\in \mathcal O_{\mathcal F}(\mathfrak M(A),M)$, where $|\mathcal F|={\rm dim}_{\mathbb C} M$, such that
$g|_{S}=f$.\smallskip
\item[(b)] Suppose $M$ is an Oka manifold and $c_A(S)>1$. Let $K\Subset M$ be an open relatively compact subset.  There is a subset $L\Subset M$ containing $K$ such that for every $f\in C(S, K)$ there exists a map $g\in \mathcal O_{\mathcal F}(\mathfrak M(A),M)$, where $|\mathcal F|={\rm dim}_{\mathbb C} M$, such that $g(\mathfrak M(A)\subset L$ and $g|_S=f$.
\end{itemize}
Here $|\mathcal F|$ stands for the cardinality of $\mathcal F$.
\end{Th}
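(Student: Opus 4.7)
The plan is to reduce the manifold-valued interpolation problem to a $\mathbb{C}^n$-valued one (with $n = \dim_{\mathbb{C}} M$) via coordinate charts of $M$, and then to invoke peak-interpolation (part (a)) or the Oka property of $M$ (part (b)) in order to produce a holomorphic map from a neighborhood of the joint spectrum of the extended functions into $M$.

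First, using the total disconnectedness of $S$ together with the continuity of $f$, one constructs a finite clopen partition $S = S_1 \sqcup \cdots \sqcup S_k$ such that each $f(S_j)$ is relatively compact in the domain of a single coordinate chart $(U_j, \psi_j)$ of $M$ with $\psi_j : U_j \to \psi_j(U_j) \subset \mathbb{C}^n$ biholomorphic. I would pick translation vectors $t_1, \dots, t_k \in \mathbb{C}^n$ so that the compact sets $\psi_j(f(S_j)) + t_j$ admit pairwise disjoint, polynomially convex neighborhoods $V_j$ (e.g., polydisks placed far apart), and define $h := (h_1, \dots, h_n) \in C(S, \mathbb{C}^n)$ by $h(x) := \psi_j(f(x)) + t_j$ for $x \in S_j$. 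Since $S$ is an interpolation set, each $h_i$ extends to some $F_i \in A$; putting $\mathcal{F} := \{F_1, \dots, F_n\}$ gives $|\mathcal{F}| = n = \dim_{\mathbb{C}} M$, and writing $F := (F_1, \dots, F_n)$, the joint spectrum $\mathfrak{M}(A_{\mathcal{F}})$ is precisely the polynomially convex hull of $F(X)$. On $\bigsqcup_j V_j$ the piecewise-defined holomorphic map $\Psi_0|_{V_j}(w) := \psi_j^{-1}(w - t_j)$ satisfies $\Psi_0 \circ F|_S = f$, so $g := \Psi_0 \circ F$ delivers the required element of $\mathcal{O}_{\mathcal{F}}(\mathfrak{M}(A), M)$ provided $\mathfrak{M}(A_{\mathcal{F}}) \subset \bigsqcup_j V_j$.

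For part (a), Bishop's peak-interpolation theorem, applicable because $c_A(S) = 1$ on a totally disconnected interpolation set, lets me choose the $F_i$ with $\|F_i\|_{C(\mathfrak{M}(A))} = \|h_i\|_{C(S)}$, forcing $F(\mathfrak{M}(A))$ into an arbitrarily small neighborhood of $h(S)$. A Kallin-type separation argument, exploiting the wide gaps between the clusters $\psi_j(f(S_j)) + t_j$, then ensures that the polynomially convex hull $\mathfrak{M}(A_{\mathcal{F}})$ stays inside $\bigsqcup_j V_j$. For part (b), the bound $c_A(S) > 1$ permits $F(\mathfrak{M}(A))$, and in particular its polynomially convex hull, to spread beyond $\bigsqcup_j V_j$, but only by an amount uniformly controlled in $f \in C(S, K)$ by $c_A(S)$ and the chart diameters of $K$. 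The Oka property of $M$ then supplies the remaining step: by the Oka approximation/extension principle on polynomially convex compact sets, the partial holomorphic map $\Psi_0$ extends, up to uniform approximation, to a holomorphic map from a neighborhood of $\mathfrak{M}(A_{\mathcal{F}})$ into $M$, whose image lies in a compact $L \Subset M$ depending only on $K$.

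The main obstacle, common to both parts, is controlling the polynomially convex hull of $F(X)$. Although $F(S)$ clusters neatly into the disjoint sets $\psi_j(f(S_j)) + t_j \subset V_j$, the hull may sprout additional connected components in the gaps between the $V_j$'s, on which $\Psi_0$ is a priori undefined. In part (a) the remedy is quantitative, combining tight peak-interpolation with Kallin-type separation of disjoint polynomially convex polydisks; in part (b) it is qualitative, using the global flexibility of an Oka target to construct a holomorphic map on a neighborhood of the full hull. The subtlest point of part (b) is making the Oka extension uniform over all $f \in C(S, K)$, so that a single compact $L \Subset M$ works for every such $f$.
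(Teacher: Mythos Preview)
Your chart-patching strategy has a genuine gap in both parts, and the paper proceeds quite differently. In part (a) you claim that $c_A(S)=1$ (via ``Bishop's peak-interpolation theorem'') lets you choose extensions $F_i$ with $\|F_i\|_{C(\mathfrak M(A))}=\|h_i\|_{C(S)}$ and that this ``forces $F(\mathfrak M(A))$ into an arbitrarily small neighborhood of $h(S)$''. But coordinate-wise norm control only places $F(\mathfrak M(A))$ in the large polydisk $\prod_i\{|z_i|\le \|h_i\|_{C(S)}\}$, not near $h(S)$; if $X$ is connected, $F(X)$ must in fact connect the widely separated clusters $\psi_j(f(S_j))+t_j$, so it cannot remain in $\bigsqcup_j V_j$, and no Kallin-type separation of the $V_j$'s rescues this. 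In part (b) you invoke the Oka approximation principle to extend $\Psi_0$ ``up to uniform approximation'' to a neighborhood of $\mathfrak M(A_{\mathcal F})$, but an approximate extension $\Psi$ yields only $\Psi\circ F|_S\approx f$, not $g|_S=f$; correcting the error requires precisely the local-chart control you have already lost, and the asserted uniformity of $L$ over all $f\in C(S,K)$ is nowhere justified.

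The paper sidesteps both difficulties by replacing your scattered local charts with a single \emph{global} surjective holomorphic map admitting local holomorphic sections: for (a) the Fornaess--Stout map $h:\mathbb D^n\to M$ (finite, locally biholomorphic, surjective onto any connected complex $n$-manifold), and for (b) Forstneri\v{c}'s map $h:\mathbb C^n\to M$ for Oka $M$. One then lifts $f$ along local sections on a clopen refinement of $S$ to obtain $\tilde f\in C(S,\mathbb D^n)$ (resp.\ $C(S,D_{\tilde K})$ for a suitable polydisk $D_{\tilde K}$), extends coordinate-wise to $\tilde g$ with image in $\mathbb D^n$ (resp.\ in $c_A(S)\,D_{\tilde K}$) using only the norm bound furnished by $c_A(S)$, and sets $g:=h\circ\tilde g$. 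Because $h$ is defined on the \emph{entire} polydisk (resp.\ on all of $\mathbb C^n$), there is no need to keep $\tilde g(\mathfrak M(A))$ near $\tilde f(S)$, and in (b) one simply takes $L:=h(c_A(S)\,D_{\tilde K})$.
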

\begin{E}\label{e1.2}
{\rm (1)  Let $A(\mathbb D^n)\subset C(\bar{\mathbb D}^n)$ be the uniform algebra of continuous functions on the closure $\bar{\mathbb D}^n$ of the open unit polydisk $\mathbb D^n\subset\mathbb C^n$ holomorphic on $\mathbb D^n$. Let $\mathbb T^n\subset \bar{\mathbb D}^n$ be the boundary torus. It is proved in \cite{RS} that every compact subset $S\subset\mathbb T^n$ of zero $1$-dimensional Hausdorff measure  is an interpolation set for $A(\mathbb D^n)$ with the interpolation constant $1$. Since such $S$ is totally disconnected, Theorem \ref{te1} implies the following extension of the Rudin-Carleson theorem (see \cite{R}, \cite{S}):}\smallskip

  Let $M$ be a connected complex manifold. For every $f\in C(S,M)$ there exists a $g\in C(\bar{\mathbb D}^n,M)$ holomorphic on $\mathbb D^n$ such that $g|_S=f$.\smallskip

\noindent {\rm (2) Let $Z$ be a connected complex manifold such that the algebra $H^\infty(Z)$ of bounded holomorphic functions on $Z$ separates  points. A sequence $S=\{s_n\}_{n\in\N}\subset Z$ is called {\em interpolating} for $H^\infty(Z)$ if $H^\infty(Z)|_S$ coincides with the Banach space of bounded complex-valued functions on $S$ equipped with supremum norm. The interpolation constant $c(S)$ is defined similarly to \eqref{e1}.
Let $X$ be the closure of $Z$ in the maximal ideal space $\mathfrak M(H^\infty(Z))$. We identify $H^\infty(Z)$ with its image in $C(\mathfrak M(H^\infty(Z)))$ under the Gelfand transform. Then the closure $\bar S\subset X$ of $S$ is an interpolation set for $H^\infty(Z)$ with the interpolation constant $c(S)$.  Moreover, $\bar S$ is homeomorphic to the Stone-\v{C}ech compactification  of  $\N$ and, hence, is totally disconnected. Now, Theorem \ref{te1} implies that
$S$ is also an interpolating sequence for bounded holomorphic maps into connected Oka manifolds:}\smallskip

Let $M$ be a connected Oka manifold. Let $f:S\rightarrow M$ be a bounded map with image in a compact subset $K\subset M$. Then there exist a compact subset $L\subset M$ depending on $M,K$ and $c(S)$ only and a holomorphic map $g: Z\rightarrow M$ with image in $L$ such that  $g|_S=f$.\smallskip
\end{E}

\noindent {\bf 1.2.} In this part we consider nonlinear interpolation problems for peak-interpolation sets.
(E.g., interpolation sets in the Rudin-Carleson theorem and in \cite{RS} are peak-interpolation.)\smallskip

Recall that a compact subset $S\subset X$ is said to be {\em peak-interpolation} for a uniform algebra $A$ on $X$ if every nonidentically zero function $f\in C(S)$ extends to a $g\in A$ that satisfies
\begin{equation}\label{e1.2}
|g(x)|<\max_S |f|\qquad \forall x\in S^c:=X\setminus S.
\end{equation}
Equivalently, every $f\in C(S, \bar{\mathbb D})$ with $f(S)\cap\mathbb T\ne\emptyset$ extends to a  $g\in A$ that satisfies $g( S^c )\subset\mathbb D$. Thus, for peak-interpolaton sets it is naturally to consider interpolation problems for maps into complex manifolds with boundaries.\smallskip

Let $B$ be a complex Banach space and $Y\subset B$ be a complex Banach submanifold. For a uniform algebra $A$ on $X$ we denote by $A(X,Y)$ the set of maps $f\in C(X,Y)$ such that
$\varphi(f)\in A$ for every $\varphi\in B^*$. We are interesting in submanifolds  subject to the following definition:\smallskip

A complex Banach submanifold $\bar M\subset B$ with boundary $\partial M$ and interior $M$ is said to be {\em universal} if for every compact Hausdorff space $X$, a uniform algebra $A\subset C(X)$ and a peak-interpolation set $S\subset X$ for $A$ the following holds:

Every $f\in C(S, \bar M)$ with $f(S)\cap\partial M\ne\emptyset$ extends to a map $g\in A(X,\bar M)$ such that
$g(S^c)\subset M$.\smallskip

The class of universal manifolds has the following properties.
\begin{Prop}\label{prop1.4}
\begin{itemize}
\item[(1)] Direct product of universal manifolds is a universal manifold.

\item[(2)] The set of universal submanifolds of a complex Banach space $B$ is invariant with respect to the action of the group of invertible affine transformations of $B$.

\item[(3)] If $M\subset \Co^n$ is a universal submanifold and $F: \Co^n\rightarrow\Co^m$ is a holomorphic embedding, then $F(M)$ is a universal submanifold of $\Co^m$.

\item[(4)] Every paracompact universal manifold is contractible.
\end{itemize}
\end{Prop}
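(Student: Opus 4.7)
The plan is to dispatch parts (2) and (3) by the same pull-back/extend/push-forward scheme, to reduce (1) to the universality of each factor through an auxiliary enlarged-algebra construction in the asymmetric case, and to handle (4) by proving all homotopy groups of $\bar M$ vanish and then invoking Whitehead's theorem.

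For (2), given an invertible affine $T(x)=Lx+b$ on $B$ and $f\in C(S,T(\bar M))$ with $f(S)\cap T(\partial M)\ne\emptyset$, I would pull back to $\tilde f:=T^{-1}\circ f:S\to\bar M$, apply universality of $\bar M$ to obtain $\tilde g\in A(X,\bar M)$ with $\tilde g(S^c)\subset M$, and push forward via $g:=T\circ\tilde g$. The required membership $g\in A(X,T(\bar M))$ follows by writing $\varphi\circ g=(\varphi\circ L)(\tilde g)+\varphi(b)$ for each $\varphi\in B^*$ and using that $\varphi\circ L\in B^*$, $\tilde g$ is coordinate-wise in $A(X,B)$, and constants lie in the unital algebra $A$. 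Part (3) follows the same recipe with the holomorphic embedding $F:\mathbb C^n\to\mathbb C^m$ in place of $T$; the single nontrivial point is that for $\psi\in(\mathbb C^m)^*$, $\psi\circ F$ is an entire function on $\mathbb C^n$ and hence the Arens--Calderón holomorphic functional calculus for uniform algebras gives $(\psi\circ F)(\tilde g_1,\ldots,\tilde g_n)\in A$.

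For (1), writing $f=(f_1,f_2)$ and noting that $\partial(M_1\times M_2)=(\partial M_1\times\bar M_2)\cup(\bar M_1\times\partial M_2)$, the hypothesis forces $f_i(S)\cap\partial M_i\ne\emptyset$ for at least one $i$. If both do, I apply universality of each factor and combine. The main case is asymmetric: say $f_1$ meets $\partial M_1$ but $f_2(S)\subset M_2$. Universality of $\bar M_1$ still yields $g_1$; for $g_2$, I will engineer a boundary touch via algebra augmentation. Form $X':=X\sqcup\{*\}$ and $A':=\{(a,c):a\in A,\ c\in\mathbb C\}\subset C(X')$, verify that this is a unital uniform algebra on $X'$, and check that $S\cup\{*\}$ is peak-interpolation for $A'$ (given a nonzero $(\phi_S,c)$, use the peak-interpolation of $S$ in $A$ when $\phi_S\ne 0$ and the zero extension otherwise). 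Then extend $f_2$ to $\tilde f_2:S\cup\{*\}\to\bar M_2$ by $\tilde f_2(*):=p\in\partial M_2$, apply universality of $\bar M_2$ to $(X',A',S\cup\{*\},\tilde f_2)$, and restrict the resulting $\tilde g_2$ to $X$ to obtain $g_2\in A(X,\bar M_2)$ with $g_2|_S=f_2$ and $g_2(S^c)\subset M_2$.

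For (4), assuming $\partial M\ne\emptyset$ (otherwise universality is vacuous), the plan is to show $\pi_n(\bar M)=0$ for every $n\ge 0$ and invoke Whitehead's theorem, using the standard fact that a paracompact Banach manifold with boundary is an ANR of the homotopy type of a CW complex. Given $\phi:S^n\to\bar M$, I would take $X:=D^{n+1}$ and $A:=C(D^{n+1})$ (for which Urysohn plus Tietze shows every closed subset is peak-interpolation), set $S:=S^n\cup\{p_0\}$ for an interior point $p_0\in D^{n+1}\setminus S^n$, and extend $\phi$ to $\phi'$ by $\phi'(p_0):=q_0\in\partial M$. Universality then produces a continuous $g:D^{n+1}\to\bar M$ with $g|_{S^n}=\phi$, null-homotoping $\phi$. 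The hardest step will be the asymmetric case of (1): the augmentation construction is conceptually simple but requires care to confirm that $A'$ is genuinely a uniform algebra (separation of $X$ from $\{*\}$ being the delicate point) and that peak-interpolation of $S\cup\{*\}$ in $A'$ is inherited from that of $S$ in $A$.
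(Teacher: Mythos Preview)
Your approach follows the same scheme as the paper's: pull-back/extend/push-forward for (2) and (3), coordinate-wise extension for (1), and vanishing homotopy groups plus a Whitehead-type result for (4). For (3) the paper invokes Runge approximation of the coordinates of $F$ on the polynomial hull of $\tilde g(X)$ rather than the holomorphic functional calculus, which amounts to the same thing here.

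You are in fact more careful than the paper in two places. For (1), the paper says only that it ``follows directly from the definition'' and does not isolate the asymmetric case $f_2(S)\cap\partial M_2=\emptyset$; your disjoint-point augmentation $X'=X\sqcup\{*\}$, $A'=A\oplus\mathbb C$ with $\tilde f_2(*)\in\partial M_2$ is a valid way to manufacture the missing boundary hit (the checks you list---that $A'$ is a uniform algebra on $X'$ and that $S\cup\{*\}$ is peak-interpolation for $A'$---are all straightforward, so this step is less hard than you anticipate). For (4), the paper takes $X=\bar{\mathbb B}^n$, $A=C(\bar{\mathbb B}^n)$, $S=\mathbb S^{n-1}$ and asserts that ``every $f\in C(\mathbb S^{n-1},\bar M)$ extends'' by universality, without addressing the case $f(\mathbb S^{n-1})\subset M$; your adjunction of an interior point sent to $\partial M$ plugs exactly this gap. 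The paper then cites Palais for contractibility of a paracompact Banach manifold with trivial homotopy groups, which is the same endpoint as your Whitehead-plus-ANR argument.
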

It is known that a closed ball of a complex Banach space $B$ is universal, see \cite{S}.
Our next result generalizes this fact.\smallskip

Let $p_M: B\rightarrow [0,\infty)$ be the Minkowski functional of an open absorbing subset $M$ of a complex Banach space $B$, i.e.,
\begin{equation}\label{equ1.3}
p_M(v):=\inf_ {tv \in M,\, t > 0 } \, \frac{1}{t} .
\end{equation}
Then $p_M$ is homogeneous, i.e., $p_M(r v)=r\, p_M(v)$ for all $r\in\mathbb R_+$, $v\in B$.
\begin{Prop}\label{prop1.5}
Suppose $M$ satisfies  the condition:
\begin{equation}\label{equ1.4}
\mbox{If $ v \in \bar M$, the closure of $M$, then the entire segment $ [ 0 , v )$ lies in $M$.}
\end{equation}
Then $p_M$ is a continuous function.

Conversely, if $p:B\rightarrow [0,\infty)$ is a continuous homogeneous function, then $M:=\{v\in B\, :\, p(v)<1\}$ is an open absorbing set satisfying \eqref{equ1.4} and $p=p_M$.
 \end{Prop}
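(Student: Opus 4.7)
The plan is to reduce continuity of $p_M$ to two pointwise equivalences that describe the (open) unit sublevel set and the (closed) unit sublevel set of $p_M$ in terms of $M$ and $\bar M$, and then use homogeneity to get continuity at every other level. Concretely, I would first establish
\[
p_M(v)<1 \iff v\in M, \qquad p_M(v)\le 1 \iff v\in \bar M.
\]
The first equivalence is almost formal: if $p_M(v)<1$ pick $t>1$ with $tv\in M$; by hypothesis \eqref{equ1.4} applied to $tv\in \bar M$, the segment $[0,tv)$ lies in $M$, and $v$ sits on it. Conversely, if $v\in M$ then by openness of $M$ the continuous map $t\mapsto tv$ sends a neighborhood of $1$ into $M$, so some $t>1$ works. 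The second equivalence is the one that genuinely uses hypothesis \eqref{equ1.4}: given $v\in\bar M$, hypothesis \eqref{equ1.4} forces $sv\in M$ for all $s\in[0,1)$, hence $p_M(v)\le 1/s\to 1$; conversely, if $p_M(v)\le 1$ I choose $t_n>0$ with $t_nv\in M$ and $1/t_n\to p_M(v)$, and pass to the limit $t_nv\to Tv\in\bar M$ with $T\ge 1$, after which hypothesis \eqref{equ1.4} (or the previous bullet) places $v$ in $\bar M$.

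Having these equivalences, homogeneity $p_M(rv)=r\,p_M(v)$ gives, for every $r>0$,
\[
\{p_M<r\}=rM, \qquad \{p_M\le r\}=r\bar M,
\]
so sublevel sets $\{p_M<r\}$ are open and $\{p_M\le r\}$ closed. The only level requiring a separate argument is $r=0$: here one writes $\{p_M=0\}=\bigcap_{r>0}r\bar M$, which is a closed set, so $\{p_M>0\}=\bigcup_{r>0}(B\setminus r\bar M)$ is open. Openness of both $\{p_M<a\}$ and $\{p_M>a\}$ for every $a\ge 0$ implies continuity of $p_M$.

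For the converse, let $p:B\to[0,\infty)$ be continuous and homogeneous, and put $M:=\{v:p(v)<1\}$. Openness of $M$ is immediate from continuity. For absorbing: given $v\in B$ take $t>0$ small, then $p(tv)=tp(v)<1$, so $tv\in M$. For hypothesis \eqref{equ1.4}: if $v\in \bar M$ then continuity gives $p(v)\le 1$, and for $s\in[0,1)$ one computes $p(sv)=sp(v)<1$, hence $sv\in M$. Finally, the identity $p=p_M$ is a one-line calculation from the definition \eqref{equ1.3}: $p_M(v)=\inf\{1/t:t>0,\ tp(v)<1\}$, which equals $p(v)$ whether $p(v)>0$ (infimum attained in the limit $t\to 1/p(v)$) or $p(v)=0$ (infimum equals $0$ since every $t>0$ is admissible).

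The argument is largely bookkeeping; the only nontrivial ingredient is the implication $p_M(v)\le 1\Rightarrow v\in\bar M$, which is where hypothesis \eqref{equ1.4} is unavoidable, in particular in the limit case where the minimizing sequence $t_n$ of inverses $1/t_n\downarrow p_M(v)$ can tend to $T>1$ rather than $T=1$. I expect this to be the main subtlety and will treat the $p_M(v)=0$ case (corresponding to $t_n\to\infty$) separately by noting that then any $t_n>1$ already forces $v=(1/t_n)(t_nv)\in M$ via \eqref{equ1.4}.
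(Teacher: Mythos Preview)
Your argument is correct. The converse half is essentially identical to the paper's. For the forward direction, the paper instead argues sequentially: for $v_n\to v$ it shows $\varlimsup p_M(v_n)\le p_M(v)$ using that $M$ is open (if $tv\in M$ then eventually $tv_n\in M$), and $\varliminf p_M(v_n)\ge p_M(v)$ using that $\bar M^c$ is open (if $tv\notin\bar M$ then eventually $tv_n\notin\bar M$), with a separate check at $v=0$. Your approach packages the same two observations as the level-set identities $\{p_M<1\}=M$ and $\{p_M\le1\}=\bar M$, then transports them to arbitrary levels by homogeneity. The content is the same---openness of $M$ gives upper semicontinuity, condition~\eqref{equ1.4} together with closedness of $\bar M$ gives lower semicontinuity---but your formulation isolates the two equivalences cleanly and avoids the case split on $v=0$ versus $v\ne0$ that the sequential argument needs. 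The paper's route, on the other hand, makes the role of the ray parameter $t(v)$ with $p_M(v)=1/t(v)$ explicit, which is perhaps more concrete.
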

\noindent Note that if $M$ satisfies \eqref{equ1.4} , then $M=\{v\in B\, :\, p_{M}(v)<1\}$ and  $\partial M=\{v\in B\, :\, p_{M}(v)=1\}$ (the boundary of $M$). Moreover, $\bar M$ is a complex Banach manifold with boundary modelled on $B$ (in fact, $\bar M\setminus p_M^{-1}(0)$ is homeomorphic to $(0,1]\times\partial M$).

For instance, an open convex neighbourhood of $0\in B$ satisfies \eqref{equ1.4}. In this case the function $p_M$ is subadditive (i.e., $p_M(v+v')\le p_M(v)+p_M(v')$). Also, every star body $M\subset\Co^n$ containing $0$ satisfies \eqref{equ1.4}. If, in addition, such $M$ is bounded and $0$ is an interior point of its kernel, then $p_M$ is a Lipschitz function, see, e.g., \cite{T}.
\begin{Th}\label{te1.4}
Suppose $M\subset B$ is an open absorbing subset satisfying \eqref{equ1.4}. Then  $\bar M$ is a universal manifold.
\end{Th}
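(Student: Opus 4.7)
The plan is to adapt the Bishop--Stout iterative construction for the closed ball (\cite{S}) to the nonlinear target $\bar M$, using the continuous Minkowski functional $p_M$ from Proposition~\ref{prop1.5} to monitor the condition "$g(x)\in\bar M$".

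Let $f\in C(S,\bar M)$ with $f(S)\cap\partial M\neq\emptyset$. By Proposition~\ref{prop1.5}, $p_M$ is continuous and $\RR_+$-homogeneous with $\bar M=\{p_M\le 1\}$, so $\rho:=p_M\circ f\in C(S,[0,1])$ attains the value $1$. Since $f(S)$ is compact in $B$, fix $R$ with $f(S)\subset \bar B_R$; by the universality of $\bar B_R$ (Stout~\cite{S}) there is an initial $F_0\in A(X,B)$ with $F_0|_S=f$, $\|F_0\|_X\le R$ and $\|F_0(x)\|<R$ for $x\in S^c$. Crucially, the continuity of $p_M$ makes
\[
C:=\sup\{p_M(e^{i\theta}v):\theta\in\RR,\ \|v\|\le R\}
\]
finite, giving a uniform rotational bound on $p_M$ over $\bar B_R$.

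I would then construct inductively $F_n\in A(X,B)$ with $F_n|_S=f$, $\|F_n\|_X\le R$ and $p_M\circ F_n\le 1+2^{-n}$ on $X$. At step $n$ the closed set $E_n:=\{x:p_M(F_n(x))\ge 1+2^{-n-1}\}$ is disjoint from $S$; Bishop's peak-interpolation technique applied to the disjoint pair $(S,E_n)$ produces $h_n\in A$ with $h_n|_S=1$, $\|h_n\|_X\le 1$, $|h_n|<1/C$ on $E_n$, and $|h_n-1|<2^{-n-2}$ off a prescribed neighbourhood of $E_n$. Setting $F_{n+1}:=h_nF_n$, the $\RR_+$-homogeneity of $p_M$ yields
\[
p_M(F_{n+1}(x))=|h_n(x)|\,p_M\bigl(e^{i\arg h_n(x)}F_n(x)\bigr)\le |h_n(x)|\,C,
\]
which is $<1$ on $E_n$, while the closeness of $h_n$ to $1$ elsewhere keeps $p_M\circ F_{n+1}\le 1+2^{-(n+1)}$ there. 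The sequence $\{F_n\}$ converges uniformly in $A(X,B)$ to a limit $g$ with $g|_S=f$ and $p_M\circ g\le 1$ on $X$. A final multiplication of $g$ by one more peak function, combined with the star-shape condition~\eqref{equ1.4} (which supplies a definite amount of "room" to shrink strictly into $M$), upgrades $p_M\circ g\le 1$ to $p_M\circ g<1$ on $S^c$, producing the required $g\in A(X,\bar M)$ with $g(S^c)\subset M$.

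The principal obstacle is that $p_M$ is only $\RR_+$-homogeneous, not $\Co$-homogeneous, so the complex peak-function factor $h_n$ does not simply scale $p_M$ by $|h_n|$. This is precisely where Proposition~\ref{prop1.5}'s continuity of $p_M$ is decisive: it makes the rotational constant $C$ finite, reducing the effect of complex rotation to a harmless bounded multiplicative factor, and the star-shape hypothesis~\eqref{equ1.4} then guarantees that the final shrinking lands strictly inside $M$.
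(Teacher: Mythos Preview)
Your scheme breaks down in the transition region. You control $h_n$ only on two sets: on $E_n$ you have $|h_n|<1/C$, and off a neighbourhood $V_n$ of $E_n$ you have $|h_n-1|<2^{-n-2}$. But on $V_n\setminus E_n$ you have neither, and there your only available bound is $p_M(F_{n+1}(x))\le |h_n(x)|\,C$, which can be as large as $C$ since $|h_n(x)|$ may be close to $1$ while $\arg h_n(x)$ is large. Thus the inductive hypothesis $p_M\circ F_{n+1}\le 1+2^{-(n+1)}$ fails on this strip, and nothing in the peak-function technology lets you shrink $V_n\setminus E_n$ to the empty set. (Even off $V_n$ the numbers do not close: from $p_M(F_n)<1+2^{-n-1}$ and $|h_n-1|<2^{-n-2}$ you only get $p_M(F_{n+1})<1+2^{-n-1}+\omega(R\cdot 2^{-n-2})$, which already exceeds $1+2^{-(n+1)}$.) The same defect recurs in your ``final multiplication'': a complex peak factor with uncontrolled argument can rotate $g(x)$ out of $\bar M$ rather than strictly into $M$.

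This is exactly the obstacle the paper's proof is engineered to overcome. Instead of a global rotational constant $C$, it forces each peak function $h_k$ to take values in a cusp-shaped region $\Omega_k=\{re^{i\theta}:0<\theta<\theta_k(r)\}$ (Lemma~\ref{lem3.1}), so that at \emph{every} point the argument of $h_k$ is bounded in terms of its modulus; the profiles $\theta_k$ are then chosen via Lemma~\ref{lem3.2} so that the total rotation-induced change in $p_M$---measured through the modulus of continuity of $p_M$ on a compact balanced hull containing all the maps---is dominated by the gain from $|h_k|<1$. The extension is written as a single series $\sum 2^{-k}h_k\,g$, and this pointwise coupling of argument to modulus is what yields the strict inequality $p_M(h(x))<1$ for every $x\in S^c$ in one stroke (see \eqref{eq3.9}, \eqref{eq4.10}). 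Your constant $C$ is too coarse a substitute for this pointwise argument control; to make an iterative product work you would need peak functions whose argument is tied to their modulus in the same way.
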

\begin{R}\label{rem1.7}
{\rm Since $p_M$ is a homogeneous function, the theorem can be restated as follows:}\smallskip

Given a uniform algebra $A$ on $X$ and a peak-interpolation set $S\subset X$  for $A$  every $f\in C(S,\bar M)$  such that $p_M\circ f\not\equiv 0$ has an extension $g\in A(X,\bar M)$ that satisfies
\[
p_M(g(x))<\max_{y\in S}\,p_M(f(y))\qquad \forall x\in S^c.
\]
\end{R}

For a subset $K\subset B$  we denote by ${\rm co}(K)$ the convex hull of $K$ (-- the minimal convex subset of $B$ containing $K$). Also, by $[K]_\varepsilon\subset B$ we denote the open $\varepsilon$-neighbourhood of $K$:
\[
[K]_\varepsilon:=\{v\in B\, :\, \inf_{v'\in K}\, \|v-v'\|_B<\varepsilon\}.
\]
\begin{C}\label{cor1.7}
Let $A$ be a uniform algebra on $X$ and $S\subset X$ be a peak-interpolation set for $A$. Let $f\in C(S,B)$.
For every $\varepsilon>0$ there is a  $g_\varepsilon \in A(X, [{\rm co}(f(S))]_\varepsilon)$ such that $g_\varepsilon|S=f$. \\
Moreover, if $B=\Co^n$ and ${\rm co}(f(S))$ has a nonempty interior $({\rm co}(f(S)))^\circ$, then there is a $g\in A(X,{\rm co}(f(S)))$ extending $f$ such that $g(S^c)\subset ({\rm co}(f(S)))^\circ$.
\end{C}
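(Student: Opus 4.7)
The plan is to reduce both parts of the corollary to Theorem \ref{te1.4}, in its reformulation Remark \ref{rem1.7}, applied to a suitably translated open neighbourhood of $C_0:={\rm co}(f(S))$.

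For the first part, I first dispose of the trivial case $f\equiv v_0$ by taking $g_\varepsilon\equiv v_0$. Otherwise I fix $v_0\in f(S)$, set $C:=C_0-v_0$, and form $M:=[C]_\varepsilon\subset B$. Since $0\in C$, the set $M$ is open and contains the $\varepsilon$-ball about the origin, hence is absorbing. To verify \eqref{equ1.4}, I take $v\in\bar M$, write $v=c+u$ with $c\in\bar C$ and $\|u\|_B\le\varepsilon$, and use convexity of $\bar C$ together with $0\in\bar C$ to get $tc\in\bar C$ for $t\in[0,1)$, whence $d(tv,C)=d(tv,\bar C)\le t\varepsilon<\varepsilon$, so $tv\in M$. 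Theorem \ref{te1.4} therefore makes $\bar M$ universal. Since $C_0$ is bounded, so is $M$, and hence $p_M$ vanishes only at $0$; non-constancy of $f$ forces $p_M\circ\tilde f\not\equiv 0$ for $\tilde f:=f-v_0:S\to M\subset\bar M$, while compactness of $\tilde f(S)$ inside the open set $M$ gives $\max_{s\in S}p_M(\tilde f(s))<1$. Remark \ref{rem1.7} then produces $g\in A(X,\bar M)$ extending $\tilde f$ with $g(X)\subset M$, and $g_\varepsilon:=g+v_0\in A(X,[C_0]_\varepsilon)$ is the required extension (the $B^*$-composite condition is preserved under translation because $A$ contains the constants).

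For the second part, if $B=\Co^n$ and $C_0^\circ\ne\emptyset$, then $f$ is necessarily non-constant (otherwise $C_0$ would reduce to a point). I pick $v_0\in C_0^\circ$ and set $M:=C_0^\circ-v_0$; then $M$ is open and absorbing, and \eqref{equ1.4} reduces to the classical convex-geometry fact that every point of the half-open segment from an interior point of a convex set to a closure point is itself interior. Since in $\Co^n$ one has $\overline{C_0^\circ}=C_0$, the set $\bar M=C_0-v_0$ is universal by Theorem \ref{te1.4}. Applying Remark \ref{rem1.7} to $\tilde f:=f-v_0:S\to\bar M$ yields $g\in A(X,\bar M)$ extending $\tilde f$ with $g(S^c)\subset M$; translating back, $g+v_0\in A(X,C_0)$ extends $f$ and satisfies $(g+v_0)(S^c)\subset C_0^\circ$.

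The main technical point I anticipate is the verification of \eqref{equ1.4} for the two translated open sets; once that is in hand, Theorem \ref{te1.4} and Remark \ref{rem1.7} carry through the extension essentially for free. The separation of the trivial case $f\equiv v_0$ is only needed to secure the mild non-degeneracy hypothesis $p_M\circ\tilde f\not\equiv 0$ of Remark \ref{rem1.7}.
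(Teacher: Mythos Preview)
Your proposal is correct and follows essentially the same route as the paper: both parts are reduced to Theorem~\ref{te1.4} (in the form of Remark~\ref{rem1.7}) applied to a translate of the relevant open convex set, namely $[{\rm co}(f(S))]_\varepsilon$ for the first part and $({\rm co}(f(S)))^\circ$ for the second. The paper's proof is extremely terse---it simply observes that these are bounded open convex sets and invokes Theorem~\ref{te1.4}---while you spell out the translation, the trivial constant case, and the verification of condition~\eqref{equ1.4}; but the underlying argument is the same.

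One very minor remark on your Part~1 verification: the decomposition $v=c+u$ with $c\in\bar C$ and $\|u\|_B\le\varepsilon$ uses that the distance from $v$ to $\bar C$ is attained, which is fine here because $\bar C=\overline{{\rm co}(f(S))}-v_0$ is compact by Mazur's theorem (closed convex hull of a compact set in a Banach space is compact). You might add a word to that effect; alternatively, the argument goes through without the decomposition by noting $d(v,C)\le\varepsilon$ and then $d(tv,C)\le\|tv-tc\|<t(\varepsilon+\delta)<\varepsilon$ for suitable $c\in C$ and small $\delta$.
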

For the last statement, note that since $f(S)\subset\Co^n$ is compact, ${\rm co}(f(S))$ is compact as well by the Caratheodory theorem, and  $({\rm co}(f(S)))^\circ\ne\emptyset$ provided that $f(S)$ contains $2n+1$ linearly independent vectors over $\RR$.\smallskip

\begin{R}\label{rem1.8}
{\rm Inspired by Theorem \ref{te1}, one can consider an analogous interpolation problem for totally disconnected peak-interpolation sets.}

\begin{Problem}\label{prob1.9}
{\rm 
Let $\bar M$ be a domain with  boundary in a complex manifold $N$. For what $\bar M$ the following holds:\smallskip

\noindent (*) For every uniform algebra $A$ on $X$, a totally disconnected peak-interpolation set $S\subset X$ for $A$ and a map $f\in C(S, \bar M)$ with $f(S)\cap\partial M\ne\emptyset$  there are a subset $\mathcal F\subset A$ with $|\mathcal F|={\rm dim}_\Co \, N$ and a map
$g\in \mathcal O_\mathcal F(\mathfrak M(A),N)$ such that $g|_S=f$ and $g(S^c)\subset M$?}
\end{Problem}
{\rm Theorem \ref{te1} (a) asserts that every $\bar M$ is near-optimal meaning that for every open neighbourhood $O\subset N$ of $\bar M$ there exists a map $g\in \mathcal O_\mathcal F (\mathfrak M(A),O)$ with $g|_S=f$.  We conjecture that for $\bar M$ with a `nice' boundary (e.g., for strongly pseudoconvex domains $\bar M\subset\Co^n$) such near-optimal $g$ can be deformed to obtain the one satisfying condition (*).}\smallskip

{\rm Clearly, if $\bar M_i\subset N_i$, $i=1,2$, satisfy (*), then $\bar M_1\times\bar M_2\subset N_1\times N_2$  satisfies (*) as well. Also, universal submanifolds $\bar M\subset \Co^n$ satisfy (*). 
The following result gives an example of nonuniversal $\bar M$ satisfying (*) (cf.  Proposition \ref{prop1.4}\,(4)).}
\begin{Th}\label{te1.10}
Let $\bar M$ be a connected Riemann surface with boundary embedded in a Riemann surface $N$ such that inclusion $\bar M\hookrightarrow N$ is homotopy equivalence. Then $\bar M$ satisfies condition (*).
\end{Th}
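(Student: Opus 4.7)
The plan is to reduce condition~(*) to Theorem~\ref{te1.4} applied to the open unit disk in $\mathbb C$, by passing to the universal cover of $N$ and uniformizing a carefully chosen bounded subdomain of the lift of $\bar M$. Let $\pi:\tilde N\to N$ be the universal covering. Since $\partial M\neq\emptyset$, $N$ cannot be a closed Riemann surface (a closed Riemann surface is never homotopy equivalent to a bordered one), so uniformization gives $\tilde N\in\{\mathbb C,\mathbb D\}$. Because $\bar M\hookrightarrow N$ is a homotopy equivalence, the map $\pi_1(\bar M)\to\pi_1(N)$ is an isomorphism, and hence $\tilde M_0:=\pi^{-1}(\bar M)$ is connected and $\pi$ restricts on it to the universal cover of $\bar M$. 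In particular $\tilde M_0$ is a simply connected Riemann surface with real analytic boundary $\partial\tilde M_0=\pi^{-1}(\partial M)$ and interior $\tilde M:=\pi^{-1}(M)$, a proper simply connected open subset of $\tilde N$.

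Next I lift $f$ to a continuous $\tilde f:S\to\tilde M_0$ with $\pi\circ\tilde f=f$: compactness and total disconnectedness of $S$ furnish a finite clopen partition $S=S_1\sqcup\cdots\sqcup S_n$ refined enough that each $f(S_i)$ sits in an evenly covered open set of $N$, and one picks a sheet on each piece and concatenates. The key geometric step is then to construct a relatively compact simply connected open $W\subset\tilde M$ whose closure $\bar W\subset\tilde M_0$ has a real analytic Jordan boundary $\partial W$ in $\tilde N$, with $\tilde f(S)\subset\bar W$ and with the peak set $P:=\tilde f(S)\cap\partial\tilde M_0$ contained in $\partial W$. Compactness of $P$ shows it meets only finitely many connected components $\gamma_1,\dots,\gamma_k$ of $\partial\tilde M_0$; take $\partial W$ to contain closed real analytic subarcs $\alpha_i\subset\gamma_i$ enclosing $P\cap\gamma_i$ and join these cyclically by disjoint real analytic arcs inside $\tilde M$, routed by simple connectivity of $\tilde M_0$ so as to enclose $\tilde f(S)\setminus P$ while avoiding the other components of $\partial\tilde M_0$.

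Let $\Psi:\mathbb D\to W$ be the Riemann map. By Carath\'eodory's theorem $\Psi$ extends to a homeomorphism $\bar{\mathbb D}\to\bar W$, and since $\partial W$ is real analytic, Schwarz reflection further extends $\Psi$ to a holomorphic $\Psi:\Omega\to\tilde N$ on a neighborhood $\Omega\supset\bar{\mathbb D}$ in $\mathbb C$. Set $\hat f:=\Psi^{-1}\circ\tilde f:S\to\bar{\mathbb D}$; by construction $\hat f(S)\cap\partial\mathbb D\supset\Psi^{-1}(P)\neq\emptyset$. Apply Theorem~\ref{te1.4} to $\mathbb D\subset\mathbb C$ (open, convex, absorbing, satisfying~(\ref{equ1.4}), with Minkowski functional $p_{\mathbb D}(z)=|z|$) to produce $f_1\in A$ with $f_1|_S=\hat f$ and $f_1(X\setminus S)\subset\mathbb D$.

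Take $\mathcal F:=\{f_1\}$; then $|\mathcal F|=1=\dim_{\mathbb C}N$. Polynomial convexity of $\bar{\mathbb D}$ puts $\mathfrak M(A_{\mathcal F})$ inside $\bar{\mathbb D}\subset\Omega$, so $\pi\circ\Psi$ is holomorphic on a neighborhood of $\mathfrak M(A_{\mathcal F})$ and $g:=(\pi\circ\Psi)\circ f_1\in\mathcal O_{\mathcal F}(\mathfrak M(A),N)$. A direct check gives $g|_S=\pi\circ\Psi\circ\hat f=\pi\circ\tilde f=f$ and $g(X\setminus S)\subset\pi(\Psi(\mathbb D))=\pi(W)\subset M$, establishing~(*). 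The main obstacle is the construction of $W$ in the second paragraph: enforcing simultaneously the real analytic Jordan character of $\partial W$ (necessary for Schwarz extension and therefore for controlling $g$ on the polynomial hull of $f_1(X)$), the peak inclusion $P\subset\partial W$, and the containment $\bar W\subset\tilde M_0$ demands careful, though topologically elementary, routing of arcs through $\tilde M$.
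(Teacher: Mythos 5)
Your proposal takes a genuinely different route from the paper's, and it contains a real gap: you assert that $\partial\tilde M_0=\pi^{-1}(\partial M)$ is a \emph{real analytic} curve in $\tilde N$, and the whole construction of $W$ (picking ``closed real analytic subarcs $\alpha_i\subset\gamma_i$'') as well as the Schwarz reflection step (extending the Riemann map $\Psi$ holomorphically past $\bar{\mathbb D}$) hinge on it. But the hypothesis of the theorem is only that $\bar M$ is a Riemann surface with boundary embedded in $N$; this does not make $\partial M$ real analytic as a subset of $N$. For instance, take $N=\mathbb C$ and $\bar M=\bar\Omega$ the closure of a Jordan domain $\Omega$ whose boundary is $C^\infty$ but nowhere real analytic: $\bar\Omega$ is a perfectly good Riemann surface with boundary embedded in $\mathbb C$ and homotopy equivalent to it. Since $\partial W$ must pass through $P=\tilde f(S)\cap\partial\tilde M_0$ (which may be, say, a Cantor set on a nonanalytic arc), you cannot in general manufacture a real analytic Jordan curve $\partial W$ with $P\subset\partial W$ and $\partial W\setminus P\subset\tilde M$; and without real analyticity of $\partial W$, $\Psi$ does not extend holomorphically past $\bar{\mathbb D}$, so $\pi\circ\Psi$ is not holomorphic on a neighbourhood of $\mathfrak M(A_{\mathcal F})\subset\bar{\mathbb D}$ and $g$ need not lie in $\mathcal O_{\mathcal F}(\mathfrak M(A),N)$.

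The paper avoids this issue entirely. It uniformizes all of $M_u$ by a map $h:M_u\to\mathbb D$, extends $h$ only \emph{continuously} to $\bar M_u$ using Pommerenke's free-boundary-arc theorem (which needs only the topological boundary structure, no smoothness), applies the peak-interpolation property to get $g'\in A$ with $g'|_S=h\circ\tilde f$ and $g'(S^c)\subset\mathbb D$, and then proves $\tilde g:=h^{-1}\circ g'\in A$ by a Mergelyan approximation argument on the polynomial hull of $g'(X)$. Because $\tilde g(X)$ is a compact subset of the open set $N_u$, the covering map $r$ is automatically holomorphic on a neighbourhood of the joint spectrum, and no boundary extension of $r$ is needed. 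If you want to keep your $W$-construction, the fix is to drop the real analyticity claim, only ask $\partial W$ to be a (free boundary) Jordan curve, use Carath\'eodory alone for the extension of $\Psi$, and then supply a Mergelyan-type argument (exactly as in the paper) to show $\Psi\circ f_1$ has coordinates in $A$ --- at which point your proof converges with the paper's.
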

{\rm Note that every connected Riemann surface with boundary $\bar M$ can be embedded in its double $W$. Then there is an open neighbourhood $N\subset W$ of $\bar M$  that satisfies the hypothesis of the theorem.}
\end{R}
\sect{Proofs of Theorems \ref{te1} and \ref{te1.10}}
\subsect{Proof of Theorem \ref{te1}}
(a) Due to the main theorem of \cite{FS} there is a finite locally biholomorphic surjective map $h:\mathbb D^n\rightarrow M$, where $n={\rm dim}_\mathbb C M$.  Since $f(S)\subset M$ is compact and $h$ is locally biholomorphic, there exist a finite open cover $(U_i)_{1\le i\le k}$ of $f(S)$ and  holomorphic maps $\tilde h_i:U_i\rightarrow \mathbb D^n$ such that $h\circ h_i={\rm id}_{U_i}$, $1\le i\le k$. Consider the
finite open cover $\mathfrak V= (f^{-1}(U_i))_{1\le i\le k}$ of $S$. Since $S$ is compact and totally disconnected, its covering dimension is zero (for basic results of the dimension theory, see, e.g., \cite{N}). In particular, there is a refinement $(W_s)_{1\le s\le m}$ of $\mathfrak V$  by clopen pairwise disjoint subsets. Let $\tau:\{1,\dots,m\}\rightarrow \{1,\dots, k\}$ be the refinement map, i.e., $W_s\subset f^{-1}(U_{\tau(s)})$, $1\le s\le m$. Let us define a map $\tilde f:S\rightarrow\mathbb D^n$ by the formula
\begin{equation}\label{e2.1}
\tilde f(x):=\tilde h_{\tau(s)}(f(x)),\qquad x\in W_s,\quad 1\le s\le m.
\end{equation}
Then, $\tilde f\in C(S,\mathbb D^n)$ and $h\circ \tilde f= f$. Since $S$ is an interpolation set with $c_A(S)=1$, we can extend coordinates of $\tilde f$ to get a continuous map
$\tilde g: \mathfrak M(A)\rightarrow \mathbb D^n$ with coordinates in $A$ such that $\tilde g|_{S}=\tilde f$.  Let $\mathcal F$ be the family of coordinates of $\tilde g$.
Then $g:=h\circ\tilde g\in\mathcal O_\mathcal F(\mathfrak M(A),M)$ is the required map interpolating $f$ on $S$.\medskip

\noindent (b) According to \cite[Thm.\,1.1]{F2} there is a surjective holomorphic map $h:\mathbb C^n\rightarrow M$,  where $n={\rm dim}_\mathbb C M$, such that for every $x\in M$ there are an open neighbourhood $U_x\Subset M$ of $x$ and a holomorphic map $\tilde h_x :U_x\rightarrow\mathbb C^n$ such that $h\circ h_x={\rm id}_{U_x}$. Let $(U_{x_i})_{1\le i\le k}$ be a finite open cover of the compact set $\bar K$ (-- the closure of $K$). Then $V:=\cup_{ i=1}^k\,\tilde h_{x_i}(U_{x_i})$ and $\tilde K:= V\cap  h^{-1}(K)$ are open relatively compact subsets of $\mathbb C^n$. Let $D_{\tilde K}\Subset\mathbb C^n$ be the minimal open polydisk centered at $0$ containing $\tilde K$ and $c_S(A)D_{\tilde K}$ be the dilation of $D_{\tilde K}$ with scalar factor $c_S(A)$. We define
\begin{equation}\label{e2.2}
L:=h(c_S(A)D_{\tilde K}).
\end{equation}
Let $f\in C(S, K)$. Then as in the proof of part (a) of the theorem we construct a map $\tilde f\in C(S,D_{\tilde K})$  such that $f=h\circ\tilde f$.  By the definition of an interpolation set, there is a map
$\tilde g\in C( \mathfrak M(A), c_S(A)D_{\tilde K})$ with coordinates in $A$ such that $\tilde g|_{S}=\tilde f$.  Let $\mathcal F$ be the family of coordinates  of $\tilde g$.
Then $g:=h\circ\tilde g\in\mathcal O_\mathcal F(\mathfrak M(A),M)$ satisfies $g(\mathfrak M(A))\subset L$ and $g|_S=f$, as required.\hfill $\Box$
\subsect{Proof of Theorem \ref{te1.10}}
  Let $r: N_u\rightarrow N$ be the universal covering of $M$. Then $\bar M_u:=r^{-1}(\bar M)$ is the universal covering  of $\bar M$, and $\partial{M_u}:=r^{-1}(\partial M)$ and $M_u:=r^{-1}(M)$ are the boundary and the interior of $\bar M_u$. By our condition, $M_u$ and $N_u$ are biholomorphic to $\mathbb D$.  Thus, without loss of generality, we assume that $N_u$ coincides with $\Di$. Then $M_u$ is a simply connected domain in $\Di$ and (as $\bar M_u$ is a manifold with boundary) $\partial M_u$ is homeomorphic to a one-dimensional manifold and there is a neighbourhood of $\partial M_u$ in  $\bar M_u$ homeomorphic to $\partial M_u\times (0,1]$. In particular, each open arc in $\partial M_u$ is a {\em free boundary arc}, see \cite[Sec.\,3.1]{P}. Hence, a biholomorphic map
 $h: M_u\rightarrow\Di$ extends to an injective continuous map $\bar M_u\rightarrow\bar\Di$, see \cite[Thm.\,3.1]{P}. 
 
 Let $A$ be a uniform algebra on $X$ and $S\subset X$ be a totally disconnected peak-interpolation set for $A$. Let $f\in C(S,\bar M)$ be such that $f(S)\cap\partial M\ne\emptyset$. Since $r$ is locally biholomorphic, as in the proof of Theorem \ref{te1} we can construct a map $\tilde f\in C(S,\bar M_u)$ such that $f=r\circ\tilde f$. Consider the map $h\circ\tilde f\in C(S,\bar\Di)$. By our hypothesis, $(h\circ \tilde f)(S)\cap\mathbb T\ne\emptyset$. Then by the definition of the peak-interpolation set, there exists $g'\in A$ such that $g'|_S=h\circ \tilde f$ and $g'(S^c)\subset\Di$. In turn, $\tilde g:=h^{-1}\circ g'$ maps $S^c$ in $M_u$ and coincides with $\tilde f$ on $S$. 

Let us prove that $\tilde g\in A$.

 In fact, let $K:=g'(X)$. Then $K$ is a compact subset of $L\cup\Di$, where $L:=(h\circ \tilde f)(S)$. Moreover, since $\tilde f(S)$ is a compact subset of $\partial M_u$, the open set $\mathbb T\setminus L\subset\mathbb T$ is nonvoid. Let $z\in\mathbb T\setminus L$.  Then there is an open disk $D$ centered at $z$ such that $D\cap K=\emptyset$. In particular, for a point $z'\in D\setminus\bar\Di$ sufficiently close to $z$, the function $p(z):=\frac{1}{z-z'}$, $z\in\bar\Di$, lies in $A(\Di)$ and satisfies
 \[
 |p(z')|>\max_{z\in K}|p(z)|.
 \]
 This implies that the polynomially convex hull $\hat K$ of $K$ does not contain points from $\mathbb T\setminus L$. Thus $\hat K$ is a compact subset of $L\cup\Di$ as well. Further, the function $h^{-1}$ is continuous on $L\cup\Di$ and holomorphic on $\Di$. Thus by the Mergelyan theorem \cite{M}, $h^{-1}|_{\hat K}$ can be uniformly approximated by holomorphic polynomials. Hence, $\tilde g:=h^{-1}\circ g'$ can be uniformly approximated on $X$ by holomorphic polynomials in $g'$, i.e., it lies in $A$, as claimed. 
 
 Now, $\tilde g(X)$ is a compact subset of $\bar M_u\subset\mathbb\Di\, (=:N_u)$. Hence, $\tilde g(\mathfrak M(A))$ is a compact subset of $\mathbb D$. In particular, the map $g:=r\circ\tilde g:\mathfrak M(A)\rightarrow N$ lies in $\mathcal O_\mathcal F(\mathfrak M(A), M)$, where $\mathcal F:=\{\tilde g\}$, and $g|_S=f$, $g(S^c)\subset M$.
 
 The proof of the theorem is complete.\hfill $\Box$

 \sect{Proofs of Propositions \ref{prop1.4} and \ref{prop1.5} }
 \subsect{Proof of Proposition \ref{prop1.4}}
 Parts (1) and (2) follow directly from the definition of a universal manifold.\smallskip
 
 (3) Since $M\subset\Co^n$ is universal and $F: \Co^n\rightarrow\Co^m$ is a holomorphic embedding, in order to prove that $F(M)$ is universal it suffices to check that if $g\in A(X, \bar M)$, then $F\circ g\in A(X,F(\bar M))$. In fact, let $\widehat{g(X)}\Subset\Co^n$ be the polynomially convex hull of the compact set $g(X)\subset \Co^n$. By the Runge approximation theorem, see, e.g., \cite[I.F]{GR},  coordinates of $F$ are uniformly approximated on a neighbourhood of $\widehat{g(X)}$ by holomorphic polynomials. Since $A$ is a uniform algebra, this implies the required statement.\smallskip
 
 (4) Let $\bar{\mathbb B}^n\subset\RR^n$ be the closed unit Euclidean ball and $\mathbb S^{n-1}\subset \bar{\mathbb B}^n$ be the unit sphere. Since $\mathbb S^{n-1}$ is a peak-interpolation set for the algebra $C(\bar{\mathbb B}^n)$, by the definition of a universal manifold every  $f\in C(\mathbb S^{n-1},\bar M)$ extends to a  $g\in C(\bar{\mathbb B}^n,\bar M)$. This shows that all homotopy groups of $\bar M$ are trivial. In turn, since $M$ is a paracompact Banach manifold, the latter implies that $M$ is contractible, see the corollary after \cite[Thm.\,15]{Pa}.\hfill $\Box$
 \subsect{Proof of Proposition \ref{prop1.5}}
 First, we prove that under condition \eqref{equ1.4} the Minkowski functional $p_M: B\rightarrow [0,\infty)$ is continuous, i.e., for every $v\in B$ and every $\{v_n\}_{n\in\N}\subset B$ converging to $v$
  \[
 \lim_{n\rightarrow\infty}p_M(v_n)=p_M(v).
 \]

 To this end, for $v\in B$ we set 
 \[
 \overrightarrow{0v}:=\{tv\in B\, :\, t\in\RR_+\}.
 \]
Due to \eqref{equ1.4} for $v\ne 0$ there is some $t(v)\in  (0,\infty]$ such that  
\[
[0, t(v) v)\subset M,\quad \overrightarrow{0v}\setminus  [0, t(v) v)\not\subset M\quad {\rm and}\quad t(v) v\in\partial M\quad {\rm if}\quad t(v)<\infty.
\] 
In particular, $p_M(v)=\frac{1}{t(v)}$.
 
 Let $\{v_n\}_{n\in\N}\subset B\setminus\{0\}$ be a sequence converging to $v$.   
  If $v=0$, then 
 since $M$ is open, $\lim_{n\rightarrow\infty}t(v_n)=\infty$. Thus, 
 \[
 \lim_{n\rightarrow\infty}p_M(v_n)=\lim_{n\rightarrow\infty}\frac{1}{t(v_n)}=0=p_M(0).
 \]
 
 If $v\ne 0$, then  $tv\in M$ for every $t\in [0,t(v))$. Since $\{tv_n\}_{n\in\N}$ converges to $tv$ and $M$ is open, there is some $n(t)\in\N$ such that $tv_n\in M$ for all $n\ge n(t)$. This implies that
 \begin{equation}\label{e2.3}
 \varlimsup_{n\rightarrow\infty}p_M(v_n)=\varlimsup_{n\rightarrow\infty}\frac{1}{t(v_n)}\le \inf_{t\in [0,t(v))}\frac{1}{t}=p_M(v).
 \end{equation}
If $p_M(v)=0$, the latter implies that $\lim_{n\rightarrow\infty}p_M(v_n)=p_M(v)$. For otherwise, $t(v)<\infty$. Hence,  $tv\in \bar M^c$ for every $t>t(v)$. Since $\{tv_n\}_{n\in\N}$ converges to $tv$ and $\bar M^c$ is open, there is some $n(t)\in\N$ such that $tv_n\in \bar M^c$ for all $n\ge n(t)$. This implies that
\begin{equation}\label{e2.4}
p_M(v)=\sup_{t> t(v)}\frac{1}{t}\le\varliminf_{n\rightarrow\infty}\frac{1}{t(v_n)}=
\varliminf_{n\rightarrow\infty}p_M(v_n).
\end{equation}
From \eqref{e2.3}, \eqref{e2.4} we obtain that $\lim_{n\rightarrow\infty}p_M(v_n)=p_M(v)$.
Thus, $p_M$ is a continuous function.\smallskip

Now, assume that $p: B\rightarrow [0,\infty)$ is a continuous homogeneous function. Let
$M:=\{v\in B\, :\, p(v)<1\}$. Then $M$ is an open set containing $0$, i.e., $M$ is absorbing. Further, if $v\in\bar M$, then continuity and homogeneity of $p$ imply that $p(v)\le 1$  and $p(tv)\in M$ for all $t\in [0,1)$. Hence, $M$ satisfies condition \eqref{equ1.4}.

Finally, for $v\ne 0$
\[
p_M(v):=\inf_{tv\in M,\, t>0}\,\frac{1}{t}=\inf_{p(tv)<1,\, t>0}\,\frac{1}{t}=\frac{1}{\frac{1}{p(v)}}=p(v),
\]
as required.\hfill $\Box$

 \sect{Proofs of Theorem \ref{te1.4} and Corollary \ref{cor1.7}}
\subsect{}
This part contains some results used in the proof of Theorem \ref{te1.4}. \smallskip

Let $\theta:[0,1]\rightarrow [0,\frac{\pi}{4}]$ be a continuous function positive on $(0,1)$ and equal to zero at $\{0,1\}$ and let
\begin{equation}\label{equ3.1}
\Omega:=\{z=re^{i\theta}\in\mathbb C\, :\, 0<\theta<\theta(r),\ r\in (0,1)\}.
\end{equation}

Let $A$ be a uniform algebra on $X$ and let $S\subset X$ be a peak-interpolation set for $A$.
\begin{Lm}\label{lem3.1}
Given $\varepsilon\in (0,1)$ and a compact set $E\subset S^c$ there is a function $h_\varepsilon\in A$ such that 
\[
h_\varepsilon(X)\subset\bar{\Omega},\quad h_\varepsilon|_S=1,\quad |h_\varepsilon(x)|<1\quad \forall\, x\in S^c\quad{\rm and}\quad |h_\varepsilon(x)|\le\varepsilon\quad \forall\,x\in E.
\]
\end{Lm}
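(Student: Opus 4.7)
The idea is to pull back the geometry of $\bar\Omega$ to the unit disk via a conformal map and then apply the peak-interpolation property of $S$ to a standard peak function.

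First I would verify that $\Omega$ is a Jordan domain, so that a Riemann map extends to a homeomorphism of closures. Its boundary is the union of the segment $[0,1]$ with the arc $\gamma(r):=re^{i\theta(r)}$, $r\in[0,1]$; the arc is injective on $(0,1)$ because $|\gamma(r)|=r$, and it meets the real segment only at its endpoints $0$ and $1$ since $\theta(r)>0$ on $(0,1)$. Hence $\partial\Omega$ is a Jordan curve.

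Next, fix a reference point $p\in\Omega$ with $|p|<\varepsilon/2$ (available because $r_0 e^{i\theta(r_0)/2}\in\Omega$ has modulus $r_0$ for every sufficiently small $r_0>0$). By the Riemann mapping theorem together with Carath\'eodory's theorem, there is a homeomorphism $\phi:\bar\Di\to\bar\Omega$ holomorphic on $\Di$ satisfying $\phi(1)=1$ and $\phi(0)=p$. Apply the peak-interpolation property of $S$ to the constant function $1$ on $S$ to obtain $g_0\in A$ with $g_0|_S=1$, $g_0(X)\subset\bar\Di$, and $|g_0(x)|<1$ for $x\in S^c$. Let $M:=\max_{x\in E}|g_0(x)|<1$, choose $\delta>0$ such that $\phi(\{|z|\le\delta\})\subset\{|w|\le\varepsilon\}$ (possible by continuity of $\phi$ at $0$ combined with $|\phi(0)|<\varepsilon/2$), and pick $N\in\N$ with $M^N\le\delta$.

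Define $h_\varepsilon:=\phi\circ g_0^N$. The four required properties are then immediate: $h_\varepsilon|_S=\phi(1)=1$; $h_\varepsilon(X)\subset\phi(\bar\Di)=\bar\Omega$; for $x\in S^c$ one has $g_0^N(x)\in\Di$, so $\phi(g_0^N(x))\in\Omega\subset\Di$ and $|h_\varepsilon(x)|<1$; and for $x\in E$, $|g_0^N(x)|\le M^N\le\delta$, so $|h_\varepsilon(x)|\le\varepsilon$ by the choice of $\delta$. Finally $h_\varepsilon\in A$ by Mergelyan's theorem: $\phi$ is a uniform limit on $\bar\Di$ of holomorphic polynomials, so $h_\varepsilon$ is a uniform limit on $X$ of polynomials in $g_0^N$, each lying in the closed algebra $A$.

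The main technical point is the Carath\'eodory extension of the Riemann map to a homeomorphism of closures, which is where the Jordan domain verification of $\Omega$ is used; once $\phi$ with the correct two normalizations $\phi(1)=1$ and $\phi(0)=p$ is in hand the construction of $h_\varepsilon$ is routine.
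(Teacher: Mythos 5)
Your proof is correct, and it reaches the conclusion by a genuinely different shrinking mechanism than the paper. Both proofs begin the same way: verify that $\Omega$ is a Jordan domain and invoke Carath\'eodory's theorem to get a homeomorphism $\bar\Di\to\bar\Omega$ holomorphic on $\Di$, and both reduce membership in $A$ to polynomial approximation of the disk-algebra function on $\bar\Di$. The divergence is in how you force the image of $E$ into $\Omega\cap\Di_\varepsilon$ while keeping $S\mapsto 1$. The paper keeps the peak function $\chi$ fixed and instead precomposes the Riemann map $G$ with a one-parameter family of M\"obius automorphisms $g_a$ fixing $z_0=G^{-1}(0)$, choosing the parameter so that $G\circ g_{a_\varepsilon}$ crushes $\Di_r\supset\chi(E)$ into $\Omega\cap\Di_\varepsilon$, and then rotates $\chi$ by the unimodular constant $g_{a_\varepsilon}^{-1}(z_1)$ so that $S$ still lands on $1$. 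You instead normalize the Riemann map once and for all ($\phi(0)=p$ near $0$, $\phi(1)=1$) and crush $g_0(E)$ toward the origin by taking a high power $g_0^N$, exploiting $\max_E|g_0|<1$. Your route avoids the M\"obius computation and the unimodular rotation, at the small cost of having to choose the interior normalization $\phi(0)=p$ carefully; the paper's route keeps the conformal map $\varepsilon$-independent and pushes all the $\varepsilon$-dependence into the M\"obius family. Both are clean; yours is marginally more elementary. (A minor remark: if $E=\emptyset$ the quantity $M$ is undefined, but the required conclusion is vacuous there, so one can simply take $N=1$; worth a parenthetical.)
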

\begin{proof}
Since $\Omega$ is a simply connected domain whose boundary is the Jordan curve
\[
\gamma(t):=\left\{ 
\begin{array}{ccc}
2t&0\le t\le \frac 12\medskip\\
(2-2t)e^{i\theta(2-2t)}&\frac 12\le t\le 1,
\end{array}
\right.
\]
by the Carath\'eodory theorem, see, e.g., \cite{P}, there is a conformal map $\mathbb D\rightarrow\Omega$ that extends to a homeomorphism $G:\bar{\mathbb D}\rightarrow\bar{\Omega}$. Let $z_0:=G^{-1}(0), z_1:=G^{-1}(1)\in\mathbb T$.
Let $\chi\in A$ be such that $\chi|_S=1$ and $|\chi(x)|<1$ for all $x\in S^c$ (existing by the definition of a peak-interpolation set.) Then there exists $r\in (0,1)$ such that $\chi(E)\subset\mathbb D_r:=\{z\in\Co\, :\, |z|<r\}$. Consider the set of M\"{o}bius transformations of $\bar{\mathbb D}$:
\[
g_a(z):=\frac{z-az_0}{1-az_0^{-1}z},\quad  z\in\bar{\mathbb D},\quad -1<a<1.
\]
Then $g_a(z_0)=z_0$ for all $a$ and  $\lim_{a\rightarrow-1} g_a(\mathbb D_r)=\{z_0\}$ (convergence in the Hausdorff metric). In particular, there exists $a_\varepsilon\in (-1,0)$ such that $G\circ g_{a_\varepsilon}$ maps $\mathbb D_r$ into $\Omega\cap \mathbb D_\varepsilon$. This map sends $g_{a_\varepsilon}^{-1}(z_1)$ to $1$. Consider the function $\chi_\varepsilon:=g_{a_\varepsilon}^{-1}(z_1)\chi\in A$. Since $|g_{a_\varepsilon}^{-1}(z_1)|=1$, we have  $\chi_\varepsilon|_S=g_{a_\varepsilon}^{-1}(z_1)$, $\chi_\varepsilon(S^c)\subset\mathbb D$ and $\chi_\varepsilon(E)\subset\mathbb D_r$. We set $h_\varepsilon:=G\circ g_{a_\varepsilon}\circ\chi_\varepsilon$. Since $G\circ g_{a_\varepsilon}\in A(\mathbb D)$, it is a uniform limit of a sequence of holomorphic polynomials. Hence, $h_\varepsilon\in A$ and has the required properties.
\end{proof}
\begin{Lm}\label{lem3.2}
Let $\Psi:[0,1]\rightarrow \mathbb R_+$ be a continuous strictly increasing function equal to $0$ at $0$. There exists a sequence of functions $\{\psi_i\}_{i\in\mathbb N}\subset C([0,1])$ positive on $(0,1)$ and equal to zero at $\{0,1\}$ such that
\[
\sum_{i=1}^k \psi_i(r_i)\le\Psi\left(\sum_{i=1}^k\frac{r_i}{2^i}\right),\quad r_i\in [0,1],\ 1\le i\le k,\ k\in\mathbb N;
\]
here the equality holds if and only if all $r_i=0$. 
\end{Lm}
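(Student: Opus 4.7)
The plan is to build the $\psi_i$'s as simple multiplicative modifications of $\Psi$ itself, weighted by $1/2^i$ and a fixed ``bump'' factor that enforces the endpoint conditions, so that the desired inequality follows almost automatically from the monotonicity of $\Psi$.

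Concretely, I would fix a single function $\phi\in C([0,1])$ with $\phi(0)=\phi(1)=0$, $0<\phi\le 1$ on $(0,1)$ (for instance $\phi(r):=4r(1-r)$), and then set
\[
\psi_i(r):=\frac{\phi(r)}{2^i}\,\Psi\!\left(\frac{r}{2^i}\right),\qquad i\in\mathbb N.
\]
Each $\psi_i$ is continuous; it vanishes at $r=0$ because both $\phi(0)=0$ and $\Psi(0)=0$, and at $r=1$ because $\phi(1)=0$; it is strictly positive on $(0,1)$ because $\phi>0$ there and, since $\Psi$ is strictly increasing with $\Psi(0)=0$, the value $\Psi(r/2^i)$ is positive for $r\in(0,1]$.

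For the inequality I would invoke the monotonicity of $\Psi$ to write $\Psi(r_i/2^i)\le \Psi\bigl(\sum_{j=1}^k r_j/2^j\bigr)$ for every $i$, so that
\[
\sum_{i=1}^k\psi_i(r_i)\le \Psi\!\left(\sum_{j=1}^k\frac{r_j}{2^j}\right)\sum_{i=1}^k\frac{\phi(r_i)}{2^i}\le \bigl(1-2^{-k}\bigr)\,\Psi\!\left(\sum_{j=1}^k\frac{r_j}{2^j}\right),
\]
since $\phi\le 1$ and $\sum_{i=1}^k 2^{-i}=1-2^{-k}$. The factor $1-2^{-k}$ is strictly less than $1$, so equality with $\Psi\bigl(\sum r_j/2^j\bigr)$ forces $\Psi\bigl(\sum r_j/2^j\bigr)=0$; by strict monotonicity this means $\sum r_j/2^j=0$, i.e.\ all $r_j=0$, which is exactly the required characterization of equality.

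The proof is essentially mechanical once the form of $\psi_i$ is guessed, so the only real ``obstacle'' is choosing that form. The point is that the inner scaling $r\mapsto r/2^i$ brings $\Psi(r_i/2^i)$ below $\Psi$ of the full weighted sum, the geometric weights $2^{-i}$ provide the telescoping $\sum 2^{-i}<1$ needed to collapse a sum of $\Psi$-values back into a single $\Psi$, and the bump factor $\phi$ plays the auxiliary role of securing vanishing at the two endpoints without ever exceeding $1$ and thus without spoiling the estimate.
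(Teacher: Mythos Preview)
Your proof is correct and is in fact considerably simpler than the paper's. The paper constructs the $\psi_k$ recursively: it sets $\psi_1(r_1)=(1-r_1)\Psi(r_1/2)$ and then defines
\[
\psi_k(r_k):=(1-r_k)\cdot\min_{r_1,\dots,r_{k-1}\in[0,1]}\Bigl\{\Psi\Bigl(\sum_{i=1}^k r_i/2^i\Bigr)-\sum_{i=1}^{k-1}\psi_i(r_i)\Bigr\},
\]
so that the required inequality holds essentially by design, and then verifies continuity, positivity on $(0,1)$, and the strict-inequality claim via the induction hypothesis and the strict monotonicity of $\Psi$. Your construction bypasses all of this with a single closed formula: by placing the geometric weight $2^{-i}$ \emph{outside} (as a multiplicative coefficient) rather than only inside $\Psi$, you get $\sum_i 2^{-i}\phi(r_i)\le 1-2^{-k}<1$, which immediately collapses the sum of $\Psi$-values into a single one with a strict gap. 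The paper's approach has the mild advantage that the resulting $\psi_k$ are in a sense as large as possible (each one absorbs all the slack left by its predecessors), but since the lemma only asks for existence, your argument is both sufficient and more transparent.
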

\begin{proof}
We set $\psi_1(r_1):=(1-r_1)\Psi(\frac{r_1}{2})\, (\le \Psi(\frac{r_1}{2}))$, $r_1\in [0,1]$. Suppose the required $\psi_i$ are already defined for all $i\le k-1$. Then we define
\[
\psi_k(r_k):=(1-r_k)\cdot\min_{r_1,\dots, r_{k-1}\in [0,1]}\left\{\Psi\left( \sum_{i=1}^{k} \frac{r_i}{2^i}\right)-\sum_{i=1}^{k-1}\psi_i(r_i)\right\},\quad r_k\in [0,1].
\]
By the induction hypothesis, $\psi_k$ is continuous equal to $0$ at $\{0,1\}$ and (since $\Psi$ is strictly increasing) for $r_k\not\in \{0,1\}$
\[
\psi_k(r_k)>(1-r_k)\cdot\min_{r_1,\dots, r_{k-1}\in [0,1]}\left\{\Psi\left( \sum_{i=1}^{k-1} \frac{r_i}{2^i}\right)-\sum_{i=1}^{k-1}\psi_i(r_i)\right\}= 0.
\]
I.e., $\psi_k$ is positive on $(0,1)$. 

Next, if one of $r_i\ne 0$, $1\le i\le k$, then by the induction hypothesis
\[
\psi_k(r_k)\le (1-r_k)\left(\Psi\left( \sum_{i=1}^{k} \frac{r_i}{2^i}\right)-\sum_{i=1}^{k-1}\psi_i(r_i)\right)<
\Psi\left( \sum_{i=1}^{k} \frac{r_i}{2^i}\right)-\sum_{i=1}^{k-1}\psi_i(r_i).
\]
This proves the required statement.
\end{proof}
\subsect{Proof of Theorem \ref{te1.4}}
Let $f\in C(S,\bar{M})$, $f(S)\cap\partial M\ne\emptyset$. According to \cite{S} there is a map $g\in A(X,B)$ such that  $g|_S=f$. Consider the compact set $K:=g(X)\subset B$.
By Mazur's theorem, see, e.g., \cite[Ch.\,VI,\,4.8]{Co}, the closure of  the convex balanced hull of $K$,
\[
\hat K:={\rm cl}\left\{\sum_{i=1}^n c_i v_i\, ,\, v_i\in K,\, c_i\in\mathbb D,\,\sum_{i=1}^n |c_i|=1,\, n\in\mathbb N \right\}
\]
is compact. Let $V\subset B$ be the subspace generated by vectors in $\hat K$ equipped with norm $\|\cdot\|_B$ and $M|_V:=M\cap V$. Then $M|_V$ is an open subset of $V$ and its Minkowski functional (defined on $V$) coincides with $p_{M}|_V$, i.e., it is continuous. Consider the modulus of continuity of $p_{M}|_{\hat K}$,
\[
\omega_{ p_{M}|_{\hat K}}(t):=\sup\{|p_{M}(v)-p_{M}(v')|\, :\  v,v'\in \hat K,\ \|v-v'\|_B\le t\},\quad t\ge 0.
\]
Since $K$ is compact and convex, $\omega_{ p_{M}|_{\hat K}}:[0,{\rm diam}\,\hat K]\rightarrow [0,\infty)$ is a nondecreasing continuous subadditive function equal to zero at $0$. We set
\begin{equation}\label{eq3.2}
\omega(t):=t+\omega_{ p_{M}|_{\hat K}}(t),\quad t\in [0,{\rm diam}\, \hat K].
\end{equation}
Then $\omega$ is a strictly increasing continuous subadditive function. Moreover, since $p_{M}|_{\hat K}$ attains value $1$ on $K$ and equals $0$ at $0$, the range of $\omega$ contains interval $[0,1]$. Thus the inverse $\omega^{-1}: [0,T]\rightarrow [0,{\rm diam}\, \hat K]$, $T:=\omega({\rm diam}\, \hat K)\ge 1$,  of $\omega$ is an increasing continuous function equals $0$ at $0$.

We define the required extension $h\in A(X,\bar M)$ of $f$  by the formula
\[
h(x)=\sum_{k=1}^\infty \frac{1}{2^k}h_k(x)g(x),\quad x\in X,
\]
for some $h_k\in A$, $|h_k|\le 1$. Here $h_k$ maps $X$ in the closure of a domain
\[
\Omega_k:=\{z=re^{i\theta}\in\mathbb C\, :\, 0<\theta<\theta_k(r),\ r\in (0,1)\},
\]
where $\theta_k:[0,1]\rightarrow[0,\frac{\pi}{4}]$ is a continuous function positive on $(0,1)$ and equals zero at $\{0,1\}$. 

To this end, we set
\[
\mathfrak m:=\max_{x\in X}\|g(x)\|_B,\qquad \mathfrak m':=\max_{x\in X}p_M (g(x))
\]
and choose continuous functions $\tilde\theta_k: [0,1]\rightarrow\mathbb R_+$ positive on $(0,1)$ and equals  $0$ at $\{0,1\}$ such that for every $n\in\N$
\begin{equation}\label{eq3.3}
\sum_{k=1}^n \tilde\theta_k(r_k)\le \frac{1}{2}\omega^{-1}\left( \sum_{k=1}^n \frac{r_k}{2^k}\right),\quad  r_k\in [0, 1],\  k\in\mathbb N.
\end{equation}
This is possible due to Lemma \ref{lem3.2}.

Next, we define
\begin{equation}\label{eq3.4}
\theta_k(r_k):=\min\left\{\frac{2^k}{\mathfrak m}\,\tilde\theta_k(1-r_k),\frac{\pi}{4}\right\},\quad r_k\in [0,1],\ k\in\mathbb N.
\end{equation}
Also, we define a sequence $\{\varepsilon_n\}_{n\in\mathbb Z_+}$ of positive numbers converging to $0$ by the formulas
\begin{equation}\label{eq3.5}
\varepsilon_0:=1,\qquad \varepsilon_{n+1}:=\min\left\{\varepsilon_n, \frac{2^n}{\mathfrak m}\omega^{-1}\left(\frac{\varepsilon_n}{\max\{\frac{\mathfrak m'}{\mathfrak m},1\}}\right),\frac{1}{2^{n+2}}\right\},\qquad n\ge 0.\smallskip
\end{equation}

Fix a proper open neighbourhood $U\subset X$ of $S$.
Let 
\begin{equation}\label{un}
U_n:=\{x\in U\, :\, p_M(g(x))<1+\varepsilon_n\},\quad n\in\mathbb N.
\end{equation}
Then $U_n\subseteq U$ is an open neighbourhood of $S$. We choose $h_n\in A$ with image in $\bar{\Omega}_n$ such that $h_n(S)=1$, $|h_n|_{S^c}|<1$ and $|h_n(x)|\le\varepsilon_n$ for all $x\in U_n^c\, (\ne\emptyset)$ (see Lemma \ref{lem3.1}). Note that $U_1\supseteq U_2\supseteq\cdots\supseteq U_n\supseteq\cdots $ because the sequence $\{\varepsilon_n\}_{n\in\N}$ is nonincreasing.  We set for convenience  $U_0:=X$, $h_0:=0$ and $\theta_0:=0$.

Since maps $h$ and $h'$,
\[
h(x)=\sum_{k=1}^\infty \frac{1}{2^k}h_k(x)g(x)\quad {\rm and}\quad h'(x):=\sum_{k=1}^\infty \frac{1}{2^k}|h_k(x)|g(x),\quad x\in X,
\]
map $X$ into $\hat K$,  
\begin{equation}\label{eq3.6}
\begin{array}{l}
\displaystyle
\|h(x)-h'(x)\|_B=
\left\| \,\sum_{k=1}^\infty \frac{1}{2^k}|h_k(x)|(e^{i{\rm Arg}(h_k(x))}-1) g(x)\right\|_B\le\smallskip\\
\displaystyle
 \sum_{k=1}^\infty  \frac{\mathfrak m}{2^k}|h_k(x)|\,2 \sin\left(\frac{{\rm Arg}(h_k(x)}{2}\right)
 \displaystyle \le \sum_{k=1}^\infty \frac{\mathfrak m}{2^k}|h_k(x)|\theta_k(|h_k(x)|)\le\smallskip\\
 \displaystyle
 \sum_{k=1}^\infty \frac{\mathfrak m}{2^k}\theta_k(|h_k(x)|)\le {\rm diam}\, \hat K,\quad x\in X.
\end{array}
\end{equation}

Suppose $x\in U_n\setminus U_{n+1}$, $n\in\mathbb Z_+$. Then \eqref{eq3.6},
subadditivity of $\omega_{ p_{M}|_{\hat K}}$  and \eqref{eq3.5} imply
\begin{equation}\label{eq3.7}
\begin{array}{l}
\displaystyle
p_M(h(x))\le p_M(h'(x))+\omega_{ p_{M}|_{\hat K}}(\|h(x)-h'(x)\|_B)\le\medskip\\
\displaystyle \left(\,\sum_{k=0}^\infty \frac{1}{2^k}|h_k(x)|\right) p_M(g(x))+
\omega_{ p_{M}|_{\hat K}}\!\!\left(\sum_{k=0}^\infty \frac{\mathfrak m}{2^k}|h_k(x)|\theta_k(|h_k(x)|)  \right)\le\medskip\\
\displaystyle   \left[\left(\sum_{k=0}^n \frac{1}{2^k}|h_k(x)|\right)(1+\varepsilon_n)+\omega_{ p_{M}|_{\hat K}}\!\!\left(\sum_{k=0}^n \frac{\mathfrak m}{2^k}|h_k(x)|\theta_k(|h_k(x)|)  \right)\right]+\medskip\\
\displaystyle \left[\left(\sum_{k=n+1}^\infty \frac{1}{2^k}|h_k(x)|\right) p_M(g(x))+
\omega_{ p_{M}|_{\hat K}}\!\!\left(\sum_{k=n+1}^\infty \frac{\mathfrak m}{2^k}|h_k(x)|\theta_k(|h_k(x)|)  \right)\right]\le\medskip
\\
\displaystyle  \left[\left(\sum_{k=0}^n \frac{1}{2^k}|h_k(x)|\right)(1+\varepsilon_n)+\omega\!\left(\sum_{k=0}^n \frac{\mathfrak m}{2^k}\theta_k(|h_k(x)|)  \right)\right]+\medskip\\
\displaystyle
\left[\frac{\mathfrak m'\varepsilon_{n+1}}{2^n}
+ \omega_{ p_{M}|_{\hat K}}\!\!\left(\frac{\mathfrak m\,\varepsilon_{n+1}}{2^n}  \right)\right].
\end{array}
\end{equation}
Note that due to \eqref{eq3.5}
\begin{equation}\label{eq3.8}
\frac{\mathfrak m'\varepsilon_{n+1}}{2^n}
+ \omega_{ p_{M}|_{\hat K}}\!\!\left(\frac{\mathfrak m\,\varepsilon_{n+1}}{2^n}  \right)\le \max\left\{\frac{\mathfrak m'}{\mathfrak m},1\right\}\cdot\omega\left(\frac{\mathfrak m\,\varepsilon_{n+1}}{2^n}  \right)\le\varepsilon_n.
\end{equation}
Also, we have for $n\ge 1$ (since $\varepsilon_n\le \frac{1}{2^{n+1}}$ and $|h_k|\le 1$)
\[
(1-\varepsilon_n)-\left(\sum_{k=1}^n \frac{1}{2^k}|h_k(x)|\right) (1+\varepsilon_n)\ge\sum_{k=1}^n \frac{1-|h_k(x)|}{2^k}+\frac{1}{2^n}-\varepsilon_n\!\left(2-\frac{1}{2^n}\right)> \sum_{k=1}^n \frac{1-|h_k(x)|}{2^k}.
\]
This and \eqref{eq3.3}, \eqref{eq3.4} imply for $n\ge 1$,
\begin{equation}\label{eq3.9}
\begin{array}{l}
\displaystyle
\left(\sum_{k=1}^n \frac{1}{2^k}|h_k(x)|\right)(1+\varepsilon_n)+\omega\!\left(\sum_{k=1}^n \frac{\mathfrak m}{2^k}\theta_k(|h_k(x)|)  \right)<\medskip\\
\displaystyle 1-\varepsilon_n +\omega\!\left(\sum_{k=1}^n \frac{\mathfrak m}{2^k}\theta_k(|h_k(x)|)  \right)-\sum_{k=1}^n \frac{1-|h_k(x)|}{2^k}\le 1-\varepsilon_n.
\end{array}
\end{equation}

Thus, applying estimates \eqref{eq3.8} and \eqref{eq3.9} to \eqref{eq3.7} we obtain that for all $x\in U_n\setminus U_{n+1}$, $n\in\mathbb Z_+$,
$p_M(h(x))< 1$, i.e., $h(x)\in M$ in this case.

Further, if $x\in\cap_{n\in\mathbb Z_+}U_n$, then $p_{M}(g(x))\le 1$.
Hence, as in \eqref{eq3.7} using continuity of $\omega$ and $\omega^{-1}$ and \eqref{eq3.3}, \eqref{eq3.4} we obtain
\begin{equation}\label{eq4.10}
\begin{array}{l}
\displaystyle
p_{M}(h(x))\le \left(\sum_{k=1}^\infty \frac{1}{2^k}|h_k(x)|\right)+\omega\left(\sum_{k=1}^\infty \frac{\mathfrak m}{2^k}\theta_k(|h_k(x)|)  \right)=\medskip\\
\displaystyle -\left(\sum_{k=1}^\infty \frac{1-|h_k(x)|}{2^k}\right)+1+\omega\left(\sum_{k=1}^\infty \frac{\mathfrak m}{2^k}\theta_k(|h_k(x)|)  \right)=\medskip\\
\displaystyle 1+\lim_{n\rightarrow\infty}\left(\omega\left(\sum_{k=1}^n \frac{\mathfrak m}{2^k}\theta_k(|h_k(x)|)  \right)-\sum_{k=1}^n \frac{1-|h_k(x)|}{2^k}\right)\le\medskip\\
\displaystyle 1+\omega\left(\frac{1}{2}\omega^{-1}\left(\sum_{k=1}^\infty\frac{1-|h_k(x)|}{2^k}\right)\right)-\sum_{k=1}^\infty \frac{1-|h_k(x)|}{2^k}
\le 1.
\end{array}
\end{equation}
Since $\frac 12\omega^{-1}(t)<\omega^{-1}(t)$ for $t>0$, the equality in \eqref{eq4.10} holds if and only if
\[
\sum_{k=1}^\infty\frac{1-|h_k(x)|}{2^k}=0.
\]
This implies that $|h_k(x)|=1$ for all $k\in\N$. In turn, according to our construction (see Lemma \ref{lem3.1}), the latter implies that $x\in S$. Thus, $p_M(h(x))< 1$ for $x\in \bigl(\cap_{n\in\mathbb Z_+}U_n\bigr)\setminus S$,  i.e., $h(x)\in M$ for such $x$ as well.

Therefore $h$ maps $X$ in $\bar{M}$,  $h|_S=f$ and $h(S^c)\subset M$, as required.
\hfill $\Box$
\subsect{Proof of Corollary \ref{cor1.7}}
The first statement follows directly from Theorem \ref{te1.4} (see also Remark \ref{rem1.7}) as  $[{\rm co}(f(S))]_\varepsilon\subset B$ is a bounded open convex set containing $f(S)$. The second one is a consequence of Theorem \ref{te1.4} as well as $({\rm co}(f(S)))^\circ\subset\Co^n$ is a bounded open convex set and $f(S)\cap\partial({\rm co}(f(S)))\ne\emptyset$.\hfill $\Box$

\end{document}